\ProcessOptions \RequirePackage{amsmath}
\newcommand{\C}{\mathbb{C}}
\newcommand{\D}{\mathbb{D}}
\newcommand{\N}{\mathbb{N}}
\renewcommand{\H}{\mathcal{H}}
\newcommand{\om}{\omega}
\renewcommand{\a}{\alpha}
\renewcommand{\d}{\delta}
\newcommand{\og}{\mathrm{O}}
\newtheorem{theorem}{Theorem}
\newtheorem{lemma}[theorem]{Lemma}
\theoremstyle{definition}
\theoremstyle{remark}
\begin{document}

\title[Jointly maximal products in  weighted growth spaces] {Jointly maximal products in  weighted growth spaces}

\author{Janne~Gr\"ohn}
\address{University of Eastern Finland, P.O.Box 111, 80101 Joensuu, Finland}
\curraddr{Universitat Aut\`onoma de Barcelona, Departament de Matem\`atiques, 08193 Bellaterra, Barcelona, Spain}
\email{janne.grohn@uef.fi}

\author{Jos\'e \'Angel Pel\'aez}

\address{Departamento de An아lisis Matem아tico, Universidad de M아laga, Campus de
Teatinos, 29071 M아laga, Spain} \email{japelaez@uma.es}

\author{Jouni R\"atty\"a}
\address{University of Eastern Finland, P.O.Box 111, 80101 Joensuu, Finland}
\email{jouni.rattya@uef.fi}

\thanks{This research was supported in part by the Ram\'on y Cajal program
of MICINN (Spain), Ministerio de Edu\-ca\-ci\'on y Ciencia, Spain,
(MTM2011-25502), from La Junta de Andaluc{\'i}a, (FQM210) and
(P09-FQM-4468), MICINN- Spain ref.
MTM2011-26538, European Networking Programme HCAA of the European
Science Foundation and the Finnish Cultural Foundation.}

\date{\today}

\keywords{Doubling function, infinite product, zero distribution}
\subjclass[2010]{Primary 30J99}

\begin{abstract}
It is shown that for any non-decreasing, continuous and unbounded doubling function $\om$ on $[0,1)$,
there exist two analytic infinite products $f_0$ and~$f_1$ such that the
asymptotic relation $|f_0(z)| + |f_1(z)| \asymp \om(|z|)$ is satisfied for all $z$ in the unit disc.
It is also shown that both functions $f_j$ for $j=0,1$ satisfy $T(r,f_j)\asymp\log\omega(r)$, as $r\to1^-$,
and hence give examples of analytic functions for which the Nevanlinna characteristic admits the regular
slow growth induced by $\omega$.
\end{abstract}
\maketitle
\section{Introduction and results}

Let $\H(\D)$ denote the algebra of all analytic functions in the unit disc $\D$ of the complex plane $\C$.
To consider the growth and the zero distribution of functions in $\H(\D)$, we use the following classical notation.
The \emph{non-integrated counting function} $n(r,f,0)$ counts
the zeros of $f$ in $\{ z\in\C : |z| \leq r \}$ according to multiplicities. Quantities $M_\infty(r,f)$, $M_p(r,f)$, where $0<p<\infty$,
$N(r,f,a)$, where $a\in\C$, and $T(r,f)$ denote the \emph{maximum modulus} of $f$, the \emph{$L^p$-mean} of $f$,
the \emph{integrated counting function of $a$-points} of $f$ and
the \emph{Nevanlinna characteristic} of $f$, respectively. We also employ the notation $a\asymp b$, which
is equivalent to the conditions $a\lesssim b$ and $b\lesssim a$,
where the former means that there exists a constant $C>0$ such that $a \leq C b$,
and the latter is defined analogously.

Let $\om : [0,1) \to (0,\infty)$ be non-decreasing, continuous and unbounded.
Such a function $\om$ is said to be \emph{doubling}, if there exists a constant $B>1$ such that
    \begin{equation} \label{eq:doubling}
      \om(1-r/2) \leq B \, \om (1-r), \quad 0<r\leq 1.
    \end{equation}
The following result shows that for
any doubling function $\om$ there exist two jointly maximal products
in the sense that the sum of their moduli behaves asymptotically as $\omega(|z|)$ in $\D$.


\begin{theorem} \label{Theorem:CanonicalProduct}
Let $\om: [0,1) \to (0,\infty)$ be doubling. Then, there exist
$f_0,f_1\in\H(\D)$ such that
    \begin{equation} \label{eq:star}
      |f_0(z)|+|f_1(z)|\asymp \om(|z|), \quad z \in\D,
    \end{equation}
where both functions $f_j$ for $j=0,1$ satisfy $n(r,f_j,0)=\og\left( (1-r)^{-1}\right)$, as $r\to 1^-$. Moreover,
\begin{equation}\label{n}
M_p(r,f_j)\asymp \om(r), \quad r\to 1^-,
\end{equation}
for all $0<p\leq \infty$, and
    \begin{equation}\label{eq:neweq}
       T(r,f_j)\asymp N(r,f_j,a)\asymp
      \log \, \om(r), \quad r\to 1^-,
    \end{equation}
    for all $a\in\C$.
\end{theorem}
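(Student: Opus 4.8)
The plan is to build $f_0$ and $f_1$ from the \emph{same} radial distribution of zeros, supported on a sparse family of circles $|z|=R_j$, but with the angular positions offset by half of the local gap, so that wherever one product is small (near its own zeros) the other is comparable to $\om$.

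First I would fix the geometry. Using that $\om$ is doubling, I would choose radii $R_j\uparrow1$ as (essentially) level sets of $\log\om$: the doubling condition forces $1-R_j$ to decay geometrically and the increments $\log\om(R_{j+1})-\log\om(R_j)$ to be bounded, so that $\delta_j:=\log(R_{j+1}/R_j)\asymp1-R_j$. I would then attach to the circle $|z|=R_j$ a number $k_j\asymp1/\delta_j$ of equally spaced zeros, chosen as integers so that three things hold simultaneously: $k_j\delta_j=\og(1)$; the counting bound $\sum_{R_j\le r}k_j=\og((1-r)^{-1})$; and, crucially, the discretised potential $\sum_{R_j<r}k_j\log(r/R_j)=\log\om(r)+\og(1)$ uniformly in $r$. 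With these data I set $f_0(z)=\prod_j\bigl(1-(z/R_j)^{k_j}\bigr)$ and $f_1(z)=\prod_j\bigl(1+(z/R_j)^{k_j}\bigr)$; since $k_j\to\infty$ both products converge locally uniformly in $\D$, and the zeros of $f_1$ are exactly those of $f_0$ rotated by half the angular spacing on each circle.

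The analytic core is a uniform pointwise estimate. Writing $z=re^{i\theta}$ and $w_j=(z/R_j)^{k_j}$, the bound $k_j\delta_j=\og(1)$ guarantees that $|w_j|=(r/R_j)^{k_j}$ is bounded away from $1$ for every layer except (at most) the single transitional layer $j^\ast$ whose circle is closest to $|z|$; all the remaining factors satisfy $\log|1-w_j|=\log|1+w_j|+\og(1)$, and their sum telescopes, via the potential estimate above, to $\log\om(r)+\og(1)$. Hence $\log|f_l(z)|=\log\om(r)+\og(1)+\log\bigl|1-(-1)^{l}w_{j^\ast}\bigr|$ for $l=0,1$, uniformly in $z$. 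From $|1\pm w|\le1+|w|$ this gives $|f_l(z)|\lesssim\om(|z|)$, the upper half of \eqref{eq:star}; and since $|1-w|+|1+w|\ge2$ forces $\max(|1-w|,|1+w|)\ge1$, I obtain $\max(|f_0(z)|,|f_1(z)|)\gtrsim\om(|z|)$, the lower half of \eqref{eq:star}. The same representation yields \eqref{n}: with $\beta=|w_{j^\ast}|$ confined to a fixed compact subset of $(0,\infty)$ one has $|f_l(re^{i\theta})|\asymp\om(r)\,\bigl|1-(-1)^{l}\beta e^{ik_{j^\ast}\theta}\bigr|$, so $M_p(r,f_l)^p\asymp\om(r)^p\cdot\frac1{2\pi}\int_0^{2\pi}|1-\beta e^{i\phi}|^p\,d\phi$ for $0<p<\infty$, the last integral being bounded above and below uniformly in $\beta$, while $p=\infty$ is immediate since $\max_\phi|1-\beta e^{i\phi}|=1+\beta\asymp1$.

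For \eqref{eq:neweq} I would argue as follows. Since $f_l(0)=1$, Jensen's formula identifies $N(r,f_l,0)$ with the potential $\sum_{R_j<r}k_j\log(r/R_j)=\log\om(r)+\og(1)$, while $T(r,f_l)=m(r,f_l)$ is bounded above by $\log M_\infty(r,f_l)=\log\om(r)+\og(1)$ and below because the set where $|f_l|\gtrsim\om$ has angular measure bounded away from $0$; this gives $T(r,f_l)\asymp\log\om(r)$. For a general $a\in\C$ the first main theorem gives $N(r,f_l,a)\le T(r,f_l)+\og(1)$, and for the reverse inequality I would use Rouch\'e on small discs about the (simple, well separated) zeros of $f_l$: once $\om(R_j)>2|a|$ one has $|f_l|>|a|$ on a suitable circle around each such zero, so each zero of $f_l$ carries a nearby $a$-point, whence $N(r,f_l,a)\gtrsim N(r,f_l,0)\asymp\log\om(r)$. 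The main obstacle lies entirely in the first step: extracting from the bare doubling hypothesis (continuity, no regularity of $\om$) a choice of $R_j$ and integer multiplicities $k_j$ that matches the potential to $\log\om$ with an \emph{additive} $\og(1)$ error while keeping $\sum_{R_j\le r}k_j=\og((1-r)^{-1})$ and the counts monotone. This additive precision — rather than the weaker $\log M_\infty(r,f_l)\asymp\log\om(r)$ — is exactly what $M_\infty(r,f_l)\asymp\om(r)$ demands, and it is precisely here that the doubling condition is indispensable.
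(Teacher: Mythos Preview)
Your plan is viable and genuinely different from the paper's construction. You put the zeros of $f_0$ and $f_1$ on the \emph{same} circles $|z|=R_j$, rotated by half the angular gap, and extract the lower bound in \eqref{eq:star} from the elementary fact $\max(|1-w|,|1+w|)\ge1$ applied to the single transitional factor. The paper instead places the zeros of $f_0$ and $f_1$ on \emph{alternating} circles and uses quotient factors
\[
f_j(z)=\prod_{k}\frac{1+a_{2k+j}z^{n_{2k+j}}}{1+a_{2k+j}^{-1}z^{n_{2k+j}}},\qquad a_k=\frac{\om(1-1/n_{k+2})}{\om(1-1/n_k)},
\]
and obtains the lower bound by a covering argument: it shows $|f_j|\asymp\om$ on an annular set $E_j$ and then proves $E_0\cup E_1=[\min I_2,1)$. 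Your angular-offset device is cleaner here than the covering. Conversely, the paper's quotient factors buy exactly what you flag as ``the main obstacle'': the finite subproduct telescopes \emph{exactly},
\[
\prod_{k=1}^{m}a_{2k+j}=\frac{\om(1-1/n_{2(m+1)+j})}{\om(1-1/n_{2+j})},
\]
so the calibration $\log M_\infty(r,f_j)=\log\om(r)+\og(1)$ comes for free, with no Riemann-sum matching of a discrete potential to $\log\om$. In your scheme that matching can be done (choose $R_j$ as level sets of $\log\om$, set $n_i=\lfloor\gamma/\delta_i\rfloor$ and observe $\sum_i\delta_i<\infty$ controls the cumulative rounding error), but it is the step that needs the most writing. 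Two small caveats: your ``single transitional layer'' is not literally true near $r=R_{j^\ast}$, where both $|w_{j^\ast}|$ and $|w_{j^\ast\pm1}|$ can sit in a fixed neighbourhood of $1$; this is harmless since those extra factors are still bounded above and below, but say so. And for $N(r,f_j,a)$ the paper replaces your Rouch\'e count by Jensen's formula on the set $E_j$ together with a density lemma to remove the exceptional set; either route works.
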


The main advantage of our self-contained and constructive proof of Theorem~\ref{Theorem:CanonicalProduct} is that
the zero distribution of the products $f_0$ and $f_1$
is explicit. These products are similar to those applied
to study the zero distribution of functions in weighted Bergman spaces \cite[Section~$3$]{PelRat}.
Note also that our argument gives an alternative way to prove \cite[Theorem~3.15]{PelRat}, whose original proof
is based on certain lacunary series. In fact,  Theorem~\ref{Theorem:CanonicalProduct}
generalizes \cite[Theorem~3.15]{PelRat} to doubling functions.

The asymptotic relation \eqref{eq:star} reproduces  recent results  \cite[Theorem~$1.1$]{KP} and \cite[Lemma~1]{AD2012}
concerning regularly growing analytic functions in $\D$.
Proofs of \cite[Lemma~1]{AD2012} and \cite[Theorem~$1.1$]{KP} rest upon the use of lacunary series, which
have been the key tool to solve similar problems in the existing literature.
The pioneering result \cite[Proposition $5.4$]{RU}, which concerns \eqref{eq:star} for $\om(r) = 1/(1-r)$,
have been a source of inspiration for several authors. For example, \cite[Theorem~1.2]{GPPR} proves \eqref{eq:star}
for $\om(r) = -\log (1-r)$. It is well known that \eqref{eq:star} has many applications in the operator theory;
for details, we refer to~\cite{AD2012}.

Construction of a product whose Nevanlinna characteristic admits a pregiven asymptotic growth
has been studied by several authors. In particular, it is known that whenever $\Lambda(r)$
exceeds the growth of $-\log(1-r)$ as $r\to 1^-$, then there exists a product $f$
whose Nevanlinna characteristic behaves asymptotically as $\Lambda(r)$ \cite[Theorem~1]{S:1966}.
The asymptotic formula
\eqref{eq:neweq} shows that we can find an infinite analytic product in $\D$ such that its Nevanlinna
characteristic grows asymptotically as the logarithm of a pregiven doubling $\om$, and hence we can
prescribe characteristics growing slower than $-\log(1-r)$ as $r\to 1^-$.
%
The method and the construction in
the proof of Theorem~\ref{Theorem:CanonicalProduct} are different from those employed in \cite{S:1966}.

%
%



The current state of the development concerning complex linear differential equation $f'' + A(z) f = 0$  in $\D$
allows us to deduce a significant amount of information on solutions $f$,
whenever we can analyze the coefficient $A$ in detail.
If we take $A$ to be one of the functions $f_0$ and $f_1$ in the Theorem~\ref{Theorem:CanonicalProduct},
then we get an important and intriguing
family of examples of such differential equations. These particular equations are way too complicated to be solved explicitly,
but the growth and the oscillation of their solutions are well understood
due to the asymptotic properties satisfied by the coefficient $A$.
To be brief with regards to this matter, we settle to mention two cases in the recent literature of which the first one concerns
%
polynomial regular functions.
This class of regularly growing analytic functions in $\D$ arises naturally in the theory of
ODEs \cite{FR}. In the sense of linear differential equations,
polynomial regular functions play a similar role in the unit disc as polynomials do in the complex plane.
For a more general example, see \cite{Sepa}.


\section{Proof of Theorem~\ref{Theorem:CanonicalProduct}}

The proof of Theorem~\ref{Theorem:CanonicalProduct} is divided in several steps.
The point of departure is the construction of the infinite products $f_0,f_1\in\H(\D)$, which is followed
by a discussion of their growth. Finally, we consider the asserted asymptotic properties of the products $f_0$ and $f_1$.


\subsection{Construction of the products} \label{sec:aux}

Before going into the details of the construction, we note the following lemmas on doubling functions.


\begin{lemma} \label{lemma:doubling}
Let $\om: [0,1) \to (0,\infty)$ be doubling. If $B>1$ is the constant in~\eqref{eq:doubling},
then
    \begin{equation} \label{eq:dest}
      \om(t) \leq C \left( \frac{1-r}{1-t} \right)^\alpha \om(r), \quad 0\leq r \leq t < 1,
    \end{equation}
where $C = \max\big\{ B \, \om(1/2)/\om(0), B^2 \big\}$ and $\alpha=\log_2{B}$.
\end{lemma}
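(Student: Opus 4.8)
The plan is to reparametrize by distance to the boundary and to iterate the doubling condition \eqref{eq:doubling}. Replacing $r$ by $1-x$ in \eqref{eq:doubling} shows that halving the distance $1-x$ of a point $x\in[0,1)$ to the boundary increases $\om$ by at most the factor $B$; more precisely, $\om\bigl(1-(1-x)/2\bigr)\le B\,\om(x)$. Since $\alpha=\log_2 B$ is chosen so that $2^\alpha=B$, each such halving contributes exactly one factor $2^\alpha$, which is what will convert an iteration count into the power $\bigl((1-r)/(1-t)\bigr)^\alpha$ appearing in \eqref{eq:dest}.

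First I would fix $0\le r\le t<1$ and introduce the dyadic points $t_j=1-2^j(1-t)$, so that $t_0=t$ and the distance of $t_j$ to the boundary is $2^j(1-t)$. As long as $2^{j+1}(1-t)\le 1$, the point $t_{j+1}$ lies in $[0,1)$ and the reparametrized doubling inequality applies to the pair $(t_j,t_{j+1})$, giving $\om(t_j)\le B\,\om(t_{j+1})$. Chaining these yields $\om(t)\le B^{\,k}\,\om(t_k)$ for every admissible $k$, and the monotonicity of $\om$ then lets me replace $\om(t_k)$ by $\om(r)$ as soon as $t_k\le r$, i.e. as soon as $2^k(1-t)\ge 1-r$.

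The estimate then splits according to whether the iteration reaches $r$ before the distance $2^k(1-t)$ exceeds $1$. In the first case I take $k$ to be the least integer with $2^k(1-t)\ge 1-r$; then $k\le 1+\log_2\bigl((1-r)/(1-t)\bigr)$, so $B^{\,k}\le B\,\bigl((1-r)/(1-t)\bigr)^\alpha$, and \eqref{eq:dest} follows with a constant not exceeding $B^2$. In the remaining case the dyadic iteration overshoots distance $1$ before descending to $r$; here I stop at the largest $k$ with $2^k(1-t)\le 1$, which forces $2^k(1-t)>1/2$ and hence $t_k<1/2$ and $r<1/2$. I then use monotonicity twice, $\om(t_k)\le\om(1/2)$ and $\om(r)\ge\om(0)$, to absorb the remaining gap; comparing $B^{\,k}\le (1-t)^{-\alpha}$ with $(1-r)^\alpha>(1/2)^\alpha=1/B$ produces the factor $B\,\om(1/2)/\om(0)$. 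Taking $C$ to be the larger of the two constants gives the claim.

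The main obstacle is precisely this boundary bookkeeping: the naive iteration is valid only while the doubled distance stays below $1$, that is, while all intermediate points remain in $[0,1)$. The regime in which $r$ is near the centre of the disc, where the iteration cannot descend all the way down to $r$, is exactly what necessitates the crude estimate $\om(1/2)/\om(0)$ and its appearance in $C$. Keeping careful track of the off-by-one in the ceiling of $\log_2\bigl((1-r)/(1-t)\bigr)$ is what pins down the final numerical constant.
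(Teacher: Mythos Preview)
Your proposal is correct and follows essentially the same route as the paper: iterate the doubling inequality a controlled number of times, then use monotonicity, with a case split governed by whether $r$ lies near the centre of the disc. The only cosmetic difference is that the paper works with the fixed dyadic partition $[1-2^{-j},1-2^{-j-1})$ of $[0,1)$, locating $t$ and $r$ in blocks indexed by $j\ge k$ and iterating $j-k+1$ times (or $j$ times when $k=0$), whereas you anchor the dyadic points at $t$ via $t_j=1-2^j(1-t)$ and iterate until you either pass $r$ or leave $[0,1)$; the resulting case splits and constants coincide.
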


Conversely, it is obvious that, if $\om: [0,1) \to (0,\infty)$ is non-decreasing, continuous, unbounded
and it satisfies \eqref{eq:dest} for some $C>1$ and $\alpha>0$,
then $\om$ must be doubling.


\begin{proof}[Proof of Lemma~\ref{lemma:doubling}]
Since $\om$ is doubling, \eqref{eq:doubling} implies $\om(t) \leq B \, \om(2t - 1)$ for all $t\in [2^{-1}, 1)$.
Moreover, if $0\leq r\leq t<1$, then there exist unique constants $j,k\in\N \cup \{0\}$, $j\geq k$, such that
$t\in \big[ 1 - 2^{-j}, 1 - 2^{-j-1} \big)$ and $r\in \big[ 1- 2^{-k}, 1- 2^{-k-1} \big)$. If $k=0$, then
    \begin{equation*}
      \om(t)  \leq B^j \om\big( 2^j(t-1) +1 \big)
      \leq \frac{B^j \om(1/2)}{\om(0)} \om(r)
      \leq \frac{B \, \om(1/2)}{\om(0)} \left( \frac{1-r}{1-t} \right)^{\log_2{B}} \om(r),
    \end{equation*}
while if $k>0$, then
    \begin{equation*}
      \om(t)  \leq B^{j-k+1} \om\big( 2^{j-k+1}(t-1) +1 \big)
      \leq B^{j-k+1} \om(r)
      \leq B^2 \left( \frac{1-r}{1-t} \right)^{\log_2{B}} \om(r).
\end{equation*}
The assertion follows.
\end{proof}

The second lemma introduces a sequence of natural numbers depending
on the growth of the doubling function $\omega$. This sequence is the foundation
of our construction.


\begin{lemma} \label{lemma:2}
Let $\om: [0,1) \to (0,\infty)$ be doubling. Then, there exist a sequence $\{ n_k\}_{k=1}^\infty$ of natural numbers,
real constants $\lambda$ and $\mu$, and a constant $d\in (0,1)$ such that
the sequence $\{ a_k \}_{k=1}^\infty$, defined by
    \begin{equation*}
      a_k=\frac{\om\big( 1 - 1/n_{k+2} \big)}{\om\big( 1 - 1/n_{k} \big)}, \quad  k\in\N,
    \end{equation*}
satisfies
    \begin{equation}\label{wlacunaryDoubleandj9}
      1<\lambda\le a_k \le\mu<\infty,
      \quad
      \frac{\log a_{k+1}}{\log a_k}<d\, \frac{n_{k+1}}{n_k},
      \quad k\in\N.
    \end{equation}
\end{lemma}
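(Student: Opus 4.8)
The plan is to construct the sequence $\{n_k\}$ by a greedy/inductive procedure that slices the interval $[0,1)$ according to the growth of $\om$. Since $\om$ is doubling, Lemma~\ref{lemma:doubling} tells us that $\om(t)$ grows at most polynomially in $1/(1-t)$; combined with the fact that $\om$ is unbounded and continuous, this gives two-sided control on the ratios of values of $\om$ at points of the form $1-1/n$. The idea is to choose $n_{k+1}$ roughly as a fixed multiple of $n_k$, say $n_{k+1}\approx \rho\, n_k$ for a suitable $\rho>1$, so that the quantities $\om(1-1/n_k)$ form a sequence whose consecutive ratios are bounded above and below. First I would fix the target: I want $a_k=\om(1-1/n_{k+2})/\om(1-1/n_k)$ to lie in a compact subinterval $[\lambda,\mu]\subset(1,\infty)$, which forces a comparable multiplicative growth of $\om$ across two steps. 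Because $\om$ is continuous and unbounded, I can select $n_{k+1}$ inductively to be the smallest natural number for which $\om(1-1/n_{k+1})$ exceeds $\om(1-1/n_k)$ by a prescribed factor; the doubling property then bounds how much the overshoot can be, delivering both $\lambda$ and $\mu$.

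The first inequality $1<\lambda\le a_k\le\mu<\infty$ should then follow from this selection together with \eqref{eq:dest}: the lower bound $\lambda>1$ comes from the fact that each two-step jump multiplies $\om$ by at least a fixed factor (guaranteed since $\om$ is unbounded, so the increments cannot stall), while the upper bound $\mu<\infty$ comes from Lemma~\ref{lemma:doubling}, since the polynomial bound
    \begin{equation*}
      \om(1-1/n_{k+2}) \le C\left(\frac{1/n_k}{1/n_{k+2}}\right)^\alpha \om(1-1/n_k)
      = C\left(\frac{n_{k+2}}{n_k}\right)^\alpha \om(1-1/n_k)
    \end{equation*}
keeps $a_k$ bounded as long as the ratios $n_{k+2}/n_k$ are kept bounded, which I will arrange by the greedy rule (the doubling condition prevents $n_{k+1}$ from having to jump too far to gain a fixed factor).

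The harder requirement is the second inequality $\log a_{k+1}/\log a_k < d\, (n_{k+1}/n_k)$ for some $d\in(0,1)$. Since the previous step already pins $\log a_k$ and $\log a_{k+1}$ between $\log\lambda$ and $\log\mu$, the left-hand side is trapped in the fixed compact interval $[\log\lambda/\log\mu,\ \log\mu/\log\lambda]$, and in particular is bounded above by the constant $\log\mu/\log\lambda$. Thus it suffices to guarantee that $n_{k+1}/n_k$ stays bounded \emph{below} by a quantity exceeding $(1/d)\cdot(\log\mu/\log\lambda)$; equivalently, I must ensure the sequence $\{n_k\}$ grows at least geometrically with a ratio large enough to dominate the fixed constant $\log\mu/\log\lambda$. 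I expect this to be the main obstacle, because it couples the choice of the multiplicative factor (which controls $\lambda,\mu$) with the requirement that the $n_k$ separate sufficiently fast. The resolution is to choose the prescribed growth factor for $\om$ large enough at the outset: a larger target factor forces $n_{k+1}/n_k$ to be large (again via the polynomial bound of Lemma~\ref{lemma:doubling}, which says $\om$ can gain at most a factor $C(n_{k+1}/n_k)^\alpha$, so a big gain in $\om$ demands a big ratio $n_{k+1}/n_k$), while simultaneously enlarging $\lambda$. One then checks that $d$ can be picked in $(0,1)$ uniformly once the factor is fixed, completing the argument.
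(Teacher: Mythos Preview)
Your proposal is correct and follows essentially the same route as the paper: one fixes a large target factor $2^\gamma$, chooses points $t_k$ (or equivalently integers $n_k$) where $\om$ gains exactly that factor, uses Lemma~\ref{lemma:doubling} to pin $a_k$ between $\lambda=2^{2\gamma-\alpha}C^{-1}$ and $\mu=2^{2\gamma+\alpha}C$, and then observes that the same lemma forces $n_{k+1}/n_k\gtrsim 2^{\gamma/\alpha}$, which for $\gamma$ large dominates the bounded ratio $\log\mu/\log\lambda$ with a constant $d<1$ to spare. The only cosmetic difference is that the paper first picks continuous points $t_k$ with $\om(t_{k+1})/\om(t_k)=2^\gamma$ and then sets $n_k=\lfloor(1-t_k)^{-1}\rfloor$, whereas you run the greedy selection directly on the integers; the bookkeeping is the same.
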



\begin{proof}
Let $\alpha>0$ and $C>1$ be the constants ensured by Lemma~\ref{lemma:doubling}.
Now, let~$\gamma$ be a~sufficiently large real constant such that
    \begin{equation} \label{eq:req}
      2^{\gamma-\alpha} \, C^{-1} > 1,
      \quad
      \frac{2\gamma+\alpha+\log_2C}{2\gamma-\alpha-\log_2C}
      < \frac{1}{2^{1/\alpha}} \left( \frac{2^{\gamma/\alpha}}{C^{1/\alpha}} - 1 \right).
    \end{equation}
Take $t_1 = 1/2$, and
define the sequence $\{ t_k \}_{k=1}^\infty$ inductively by $\om(t_{k+1})/\om(t_k)=2^{\gamma}$ for $k\in\N$.
Let $n_k = {\rm floor}\big( (1-t_k )^{-1} \big)$, where ${\rm floor}(x) = \max\, \{ n\in\N : n \leq x\}$.
By means of Lemma~\ref{lemma:doubling}, and the estimates $2^{-1} \le n_k(1-t_k)\le 1$,
we may define $1 < \lambda< \mu < \infty$ by
    \begin{equation*}
      a_k \leq \frac{C \, \om( t_{k+2} )}{ \big( n_k (1-t_k) \big)^\alpha \om(t_k)}
      \leq 2^\alpha C  \, \frac{\om( t_{k+2} )}{\om(t_{k+1})} \, \frac{\om( t_{k+1} )}{\om(t_k)} = 2^{2 \gamma+ \alpha} C = \mu,
    \end{equation*}
and
    \begin{equation*}
      a_k \geq \frac{\big( n_{k+2} (1-t_{k+2}) \big)^\alpha \om( t_{k+2} )}{ C \, \om(t_k)}
      \geq \frac{1}{2^\alpha C}\,  \frac{ \om( t_{k+2} )}{ \, \om(t_{k+1})} \,  \frac{ \om( t_{k+1} )}{ \, \om(t_k)} = 2^{2\gamma-\alpha} C^{-1}=\lambda,
    \end{equation*}
since these inequalities hold for all $k\in\N$. By Lemma~\ref{lemma:doubling}  we conclude
    \begin{equation} \label{eq:myestold}
      \begin{split}
        \frac{n_{k+1}}{n_k}
        & > \left( \frac{1}{1-t_{k+1}} - 1 \right) (1-t_k)
        > \frac{1-t_k}{1-t_{k+1}} - 1 \\
        & \geq \frac{1}{C^{1/\alpha}} \left( \frac{\om(t_{k+1})}{\om(t_k)} \right)^{1/\alpha} -1
        = \frac{2^{\gamma/\alpha}}{C^{1/\a}}-1, \quad k\in\N,
      \end{split}
    \end{equation}
and further by \eqref{eq:req}, we have
    \begin{equation*}
      \begin{split}
        \frac{\log a_{k+1}}{\log a_k} & \leq \frac{\log \mu}{\log \lambda}
        = \frac{2\gamma+\alpha+\log_2C}{2\gamma-\alpha-\log_2C}
        <\frac{1}{2^{1/\alpha}} \left( \frac{2^{\gamma/\alpha}}{C^{1/\alpha}} - 1 \right)
        <\frac{1}{2^{1/\alpha}} \, \frac{n_{k+1}}{n_k}.
      \end{split}
    \end{equation*}
This confirms the last inequality in \eqref{wlacunaryDoubleandj9} for $d=2^{-1/\alpha}$.
\end{proof}

Let $\{n_k\}_{k=1}^\infty$ be the sequence ensured by Lemma~\ref{lemma:2}, and  define
    \begin{equation*}
      f_j(z)=\prod_{k=1}^\infty \frac{ 1+a_{2k+j} z^{n_{2k+j}}}{1+a^{-1}_{2k+j} z^{n_{2k+j}}},\quad z\in\D, \quad j=0,1.
    \end{equation*}
Evidently both functions $f_j$ belong to $\H(\D)$, since all factors are bounded functions in~$\D$,
and according to~\eqref{wlacunaryDoubleandj9} the sum
    \begin{equation*}
      \sum_{k=1}^\infty \left| \frac{ 1+a_{2k+j} z^{n_{2k+j}}}{1+a^{-1}_{2k+j} z^{n_{2k+j}}} - 1 \right|
      \leq \sum_{k=1}^\infty \frac{a_{2k+j} - a_{2k+j}^{-1}}{1-a_{2k+j}^{-1}} \, |z|^{n_{2k+j}}
      \leq (1+\mu) \sum_{k=1}^\infty |z|^{n_{2k+j}}
    \end{equation*}
converges uniformly on compact subsets of $\D$.


\subsection{Growth estimates for the maximum modulus of the products}

To estimate the growth of $f_j$ for $j=0,1$ we define $r_{2m+j} = e^{-1/n_{2m+j}}$ for $m\in\N$, and write
   \begin{equation}\label{zqp4}
     |f_j(z)|=\left| \prod_{k=1}^m a_{2k+j} \frac{a_{2k+j}^{-1}+z^{n_{2k+j}}}{1+a^{-1}_{2k+j} z^{n_{2k+j}}}\right|
     \left|\prod_{k=1}^\infty \frac{ 1+a_{2(m+k)+j} z^{n_{2(m+k)+j}}}{1+a^{-1}_{2(m+k)+j}
         z^{n_{2(m+k)+j}}}\right|.
   \end{equation}
First, we prove that the infinite subproduct in \eqref{zqp4} is bounded in~$\D$. To this end,
let $\tau = 2^{\gamma/\alpha} C^{-1/\a}-1$ be the lower bound in \eqref{eq:myestold}.
According to \eqref{eq:req} we know that $\tau>1$, and
    \begin{equation} \label{eq:recur}
      \frac{n_{2(m+k)+j}}{n_{2m+j}} = \frac{n_{2(m+k)+j}}{n_{2(m+k)+j-1}} 
      \dotsb \frac{n_{2m+j+1}}{n_{2m+j}} \geq \tau^{2k},
      \quad k,m\in\N.
    \end{equation}
Since $h_1(x)= (y+x)/(1+y x)$ is increasing on $[0,1)$ for each $y\in[0,1)$, we obtain
    \begin{equation}\label{67}
      \begin{split}
        \left|\frac{ 1+a_{2(m+k)+j}  z^{n_{2(m+k)+j}}}{1+a^{-1}_{2(m+k)+j}  z^{n_{2(m+k)+j}}}\right|
        &=a_{2(m+k)+j}\left|\frac{ a^{-1}_{2(m+k)+j}+ z^{n_{2(m+k)+j}}}{1+a^{-1}_{2(m+k)+j} z^{n_{2(m+k)+j}}}\right|\\
        & \le a_{2(m+k)+j} \, \frac{ a^{-1}_{2(m+k)+j}+ |z|^{n_{2(m+k)+j}}}{1+a^{-1}_{2(m+k)+j} |z|^{n_{2(m+k)+j}}}\\
        &< \frac{1+a_{2(m+k)+j}\left(\frac1e\right)^{\frac{n_{2(m+k)+j}}{n_{2m+j}}}}{1+a^{-1}_{2(m+k)+j}
          \left(\frac1e\right)^{\frac{n_{2(m+k)+j}}{n_{2m+j}}}}
      \end{split}
    \end{equation}
for $|z|< r_{2m+j}$ and $k,m\in\N$.
Moreover, since $h_2(x,y)=(1+xy)/(1+x^{-1}y)$ is increasing
in both variables, provided that $x>1$ and $0\leq y<1$, estimates~\eqref{wlacunaryDoubleandj9},~\eqref{eq:recur}
and \eqref{67} imply
    \begin{equation}\label{zqp5}
      \left| \prod_{k=1}^\infty \frac{ 1+a_{2(m+k)+j} z^{n_{2(m+k)+j}}}{1+a^{-1}_{2(m+k)+j}
          z^{n_{2(m+k)+j}}}\right|
      <\prod_{k=1}^\infty\frac{1+\mu
        \left(\frac1e\right)^{\tau^{2k}}}{1+\mu^{-1}
        \left(\frac1e\right)^{\tau^{2k}}}
      \leq C^\star< \infty,
    \end{equation}
for $|z|< r_{2m+j}$ and $m\in\N$, where $C^\star>0$ is a constant independent of $m\in\N$.
Second, we proceed to derive an upper estimate for the maximum modulus of~$f_j$.
By means of \eqref{wlacunaryDoubleandj9}, \eqref{zqp4},
\eqref{zqp5} and the inequality $1-x \leq e^{-x}$ for $x\ge0$, we get
    \begin{align}
      |f_j(z)|&< C^\star \prod_{k=1}^m a_{2k+j}
      = C^\star \, \frac{\om\big(1-1/n_{2(m+1)+j}\big)}{\om\big(1-1/n_{2+j}\big)}
      \le C^\star \mu \, \frac{\om\big(1-1/n_{2m+j}\big)}{\om\big(1-1/n_{2+j}\big)} \notag\\
      &\le C^\star \mu \, \frac{\om(r_{2m+j})}{\om\big(1-1/n_{2+j}\big)},
      \quad |z|< r_{2m+j},\quad m\in\N. \label{zqp6}
    \end{align}
If $|z|\ge r_{2+j}$, then $r_{2(m-1)+j}\le|z|<r_{2m+j}$ for some $m\in\N\setminus\{1\}$.
Note that by \eqref{eq:recur} there exists $t\in\N$  such that $n_{2(m+t)+j} > 2 \,n_{2m+j}$ for all $m\in\N$.
Since $e^{-x}\le 1-x/2$ for $0\leq x\leq 1$, we conclude
    \begin{equation*}
      r_{2m+j} \leq 1 - (2 \, n_{2m+j})^{-1} < 1 - 1/n_{2(m+t)+j}, \quad m\in\N.
    \end{equation*}
Then \eqref{wlacunaryDoubleandj9}, \eqref{zqp6} and the inequality $1-x\le
e^{-x}$ for $0\leq x\leq 1$, give
    \begin{equation*}
      \begin{split}
        |f_j(z)|&<
        C^\star \mu \, \frac{\om\big( 1 - 1/n_{2(m+t)+j} \big)}{\om\big(1-1/n_{2+j}\big)}
        \leq C^\star \mu^{2+t} \, \frac{\om\big( 1 - 1/n_{2(m-1)+j} \big)}{\om\big(1-1/n_{2+j}\big)}\\
        & \leq C^\star \mu^{2+t} \, \frac{\om\big( r_{2(m-1)+j} \big)}{\om\big(1-1/n_{2+j}\big)}
        \leq C^\star \mu^{2+t}\, \frac{\om\big( |z| \big)}{\om\big(1-1/n_{2+j}\big)}.
      \end{split}
    \end{equation*}
Consequently, the maximum modulus of $f_j$ satisfies
    \begin{equation} \label{eq:maxmod}
      M_\infty(r,f_j) = \max_{|z|=r} \big| f_j(z) \big| \lesssim \om(r),\quad 0\leq r<1.
    \end{equation}


\subsection{Growth estimates for the minimum modulus of the products}

The following discussion shows that the difference between the maximum modulus and
the minimum modulus of $f_j$ for $j=0,1$ is small in a large subset of the unit disc.
Define $E_j = \bigcup_{m=1}^\infty I_{2m+j}$, where $I_{2m+j}$ is the closed interval whose endpoints are
    \begin{equation*}
      \min I_{2m+j} = \left( a_{2m+j}^{-n_{2m+j}^{-1}} \right)^{1-\d} \left( a_{2(m+1)+j}^{-n_{2(m+1)+j}^{-1}} \right)^\d, \quad m\in\N,
    \end{equation*}
and
    \begin{equation*}
      \max I_{2m+j}   = \left( a_{2m+j}^{-n_{2m+j}^{-1}} \right)^{\delta \frac{n_{2m+j}}{n_{2m+1+j}}}
      \left( a_{2(m+1)+j}^{-n_{2(m+1)+j}^{-1}} \right)^{1-\d \frac{n_{2m+j}}{n_{2m+1+j}} }, \quad m\in\N.
    \end{equation*}
Here $0<\d<1$ is a sufficiently small constant, which is to be determined later. According to
\eqref{wlacunaryDoubleandj9} all elements in the sequence $\big\{ a_m^{-1/n_m} \big\}_{m=1}^\infty$ belong
to the interval $(0,1)$, this sequence is strictly increasing, and it converges to $1$, as $m\to\infty$.
Moreover, $I_{2m+j} \subset  \big( a_{2m+j}^{-1/n_{2m+j}},  a_{2(m+1)+j}^{-1/n_{2(m+1)+j}} \big)$ for all $m\in\N$.
First, we prove that the infinite subproduct in \eqref{zqp4} is uniformly bounded away from zero
for $|z|\in E_j$. If $|z|\in I_{2m+j}$, then $|z|^{n_{2(m+k)+j}}<a_{2(m+k)+j}^{-1}$ for all $k\in\N$, and therefore
    \begin{equation} \label{eq:lest}
      \begin{split}
        \left|\frac{ 1+a_{2(m+k)+j}  z^{n_{2(m+k)+j}}}{1+a^{-1}_{2(m+k)+j}  z^{n_{2(m+k)+j}}}\right|
        & \ge a_{2(m+k)+j} \, \frac{ a^{-1}_{2(m+k)+j}- |z|^{n_{2(m+k)+j}}}{1-a^{-1}_{2(m+k)+j} \, |z|^{n_{2(m+k)+j}}}\\
        &=\frac{1-a_{2(m+k)+j}\, |z|^{n_{2(m+k)+j}}}{1-a^{-1}_{2(m+k)+j} \, |z|^{n_{2(m+k)+j}}}
      \end{split}
    \end{equation}
for $|z|\in I_{2m+j}$ and $k,\,m\in\N$. Since $h_3(x,y)=(1-x y)/(1-x^{-1}y)$ is decreasing
in both variables, when $x>1$ and $0\leq y<1$,
estimates \eqref{wlacunaryDoubleandj9}, \eqref{eq:recur} and \eqref{eq:lest}
imply that there exists a constant $C^\ast>0$, independent of $m\in\N$, such that
    \begin{align}
      & \left|\prod_{k=1}^\infty\frac{ 1+a_{2(m+k)+j} z^{n_{2(m+k)+j}}}{1+a^{-1}_{2(m+k)+j}
          z^{n_{2(m+k)+j}}}\right|\notag\\
      & \qquad \ge\prod_{k=1}^\infty\frac{1-a_{2(m+k)+j} \Bigg(a_{2m+j}^{-\d}
        \, a_{2(m+1)+j}^{-\frac{n_{2m+1+j}}{n_{2(m+1)+j}}\left( 1-\d \frac{n_{2m+j}}{n_{2m+1+j}}  \right)}\Bigg)^\frac{n_{2(m+k)+j}}{n_{2m+1+j}}}
      {1-a_{2(m+k)+j}^{-1}
        \Bigg(a_{2m+j}^{-\d}
        \, a_{2(m+1)+j}^{-\frac{n_{2m+1+j}}{n_{2(m+1)+j}}\left( 1-\d \frac{n_{2m+j}}{n_{2m+1+j}}  \right)}\Bigg)^\frac{n_{2(m+k)+j}}{n_{2m+1+j}}} \label{eq:llest}\\
      &  \qquad \geq \prod_{k=1}^\infty \frac{1-\mu \big( \lambda^{-\delta}\big)^{\tau^{2k-1}}}{1-\mu^{-1} \big( \lambda^{-\delta}\big)^{\tau^{2k-1}}}
      \geq C^\ast, \quad |z|\in I_{2m+j}, \quad m\in\N. \notag
    \end{align}
Second, we proceed to estimate the minimum modulus of $f_j$ on $E_j$. Note that
the last inequality in \eqref{wlacunaryDoubleandj9} implies
    \begin{equation} \label{eq:difiest}
      \begin{split}
        a_{2k+j} \, |z|^{n_{2k+j}} &\ge
        a_{2k+j} \left(a_{2m+j}^{-\frac1{n_{2m+j}}(1-\d)} a_{2(m+1)+j}^{-\frac1{n_{2(m+1)+j}}\d}\right)^{n_{2k+j}}\\
        & =\frac{a_{2k+j}}{a_{2m+j}^{\frac{n_{2k+j}}{n_{2m+j}}(1-\d)} a_{2(m+1)+j}^{\frac{n_{2k+j}}{n_{2(m+1)+j}}\d}}
        \ge\frac{a_{2k+j}}{a_{2k+j}^{d^{2(m-k)}(1-\d)} a_{2k+j}^{d^{2(m+1-k)}\d}}\\
        & \ge\frac{a_{2k+j}}{a_{2k+j}^{1-\d} \, a_{2k+j}^{d\d}}
        =a_{2k+j}^{\d(1-d)}
        \ge\lambda^{\d(1-d)} > 1
      \end{split}
    \end{equation}
for $|z|\in I_{2m+j}$ when $1\leq k \leq m$, and $m\in\N$; in particular, $|z|^{n_{2k+j}} > a_{2k+j}^{-1}$.
Moreover, choose $t\in\N$ sufficiently large such that $1-\lambda^{-1} \geq \tau^{-2t}$. Since
$h_4(x) = 1- (1-a)x^{-1} - a^{1/x} \geq 0$ for all $x\in [1,\infty)$, provided that $a\in(0,1)$, by applying
\eqref{wlacunaryDoubleandj9}  and \eqref{eq:recur}, we obtain
    \begin{equation} \label{eq:zt}
      \begin{split}
        |z| & \leq a_{2(m+1)+j}^{-1/n_{2(m+1)+j}} \leq \lambda^{-1/n_{2(m+1)+j}}
        \leq 1 - \left( 1 - \lambda^{-1} \right) n_{2(m+1)+j}^{-1} \\
        & \leq 1 - \tau^{-2t} \, n_{2(m+1)+j}^{-1} \leq 1 - 1/n_{2(m+1+t)+j}, \quad |z|\in I_{2m+j}, \quad m\in\N.
      \end{split}
    \end{equation}
Therefore \eqref{zqp4}, \eqref{eq:llest} and \eqref{eq:zt} yield
    \begin{equation} \label{eq:product}
      \begin{split}
        |f_j(z)|
        &\geq C^\ast \prod_{k=1}^m a_{2k+j} \left|\frac{a_{2k+j}^{-1}+z^{n_{2k+j}}}{1+a^{-1}_{2k+j} \, z^{n_{2k+j}}}\right|\\
        & = C^\ast \frac{\om \big( 1- 1/n_{2(m+1)+j} \big)}{\om\big( 1 - 1/n_{2+j} \big)}
        \prod_{k=1}^m\left|\frac{a_{2k+j}^{-1}+z^{n_{2k+j}}}{1+a^{-1}_{2k+j}  \, z^{n_{2k+j}}}\right| \\
        &\geq C^\ast \frac{\om \big( 1- 1/n_{2(m+1+t)+j} \big)}{\mu^t \, \om\big( 1 - 1/n_{2+j} \big)}
        \prod_{k=1}^m\frac{|z|^{n_{2k+j}}-a_{2k+j}^{-1}}{1-a^{-1}_{2k+j} |z|^{n_{2k+j}}} \\
        &\geq C^\ast \frac{\om( |z| )}{\mu^t \, \om\big( 1 - 1/n_{2+j} \big)}
        \prod_{k=1}^m\frac{|z|^{n_{2k+j}}-a_{2k+j}^{-1}}{1-a^{-1}_{2k+j} |z|^{n_{2k+j}}}
      \end{split}
    \end{equation}
for $|z|\in I_{2m+j}$ and $m\in\N$.
For our purposes, it suffices to show that the product in the last line of \eqref{eq:product} is uniformly
bounded away from zero for $|z|\in E_j$. To simplify computations, we prove that the reciprocal of this product
is uniformly bounded for such values of~$z$. Now, since $\log x\le x-1$ for $x\ge1$, we have
    \begin{equation*}
      \begin{split}
        \prod_{k=1}^m\frac{1-a^{-1}_{2k+j} |z|^{n_{2k+j}}}{|z|^{n_{2k+j}}-a_{2k+j}^{-1}}
        & =\exp\left(\sum_{k=1}^m\log\frac{1-a^{-1}_{2k+j} |z|^{n_{2k+j}}}{|z|^{n_{2k+j}}-a_{2k+j}^{-1}}\right)\\
        & \le\exp\left((1+\mu)\sum_{k=1}^m\frac{1-|z|^{n_{2k+j}}}{a_{2k+j} |z|^{n_{2k+j}}-1}\right)
      \end{split}
    \end{equation*}
for $|z|\in I_{2m+j}$ and $m\in\N$. By means of \eqref{eq:recur}, \eqref{eq:difiest}, and
the estimate $1-e^{-x} \leq x$ for $x\geq 0$, we conclude
    \begin{equation*}
      \begin{split}
        \prod_{k=1}^m\frac{1-a^{-1}_{2k+j} |z|^{n_{2k+j}}}{|z|^{n_{2k+j}}-a_{2k+j}^{-1}}
        &\le\exp\left(\frac{1+\mu}{\lambda^{\d(1-d)}-1}
          \, \sum_{k=1}^m(1-|z|^{n_{2k+j}})\right)\\
        &\le\exp\left(\frac{1+\mu}{\lambda^{\d(1-d)}-1}
          \, \sum_{k=1}^m\left(1-a_{2m+j}^{-n_{2k+j}/n_{2m+j}}\right)\right)\\
        &\le\exp\left(\frac{1+\mu}{\lambda^{\d(1-d)}-1}\, \log\mu
          \, \sum_{k=1}^m\frac{n_{2k+j}}{n_{2m+j}}\right)\\
        &\le\exp\left(\frac{1+\mu}{\lambda^{\d(1-d)}-1}\, \log\mu
          \, \sum_{k=0}^\infty\frac1{\tau^k}\right)
      \end{split}
    \end{equation*}
for $|z|\in I_{2m+j}$ and $m\in\N$, which gives the desired uniform lower bound for the product
in the last line of \eqref{eq:product}. Hence, by \eqref{eq:maxmod} and \eqref{eq:product}, we get
    \begin{equation} \label{eq:finalest}
      |f_j(z)|\asymp \om(|z|), \quad |z| \in E_j = \bigcup_{m=1}^\infty I_{2m+j}.
    \end{equation}


\subsection{The covering property of the sets where the products are maximal}

It remains to prove that the sets $E_0$ and $E_1$ induce a covering of $[\min I_2,1)$.
Note that the closed intervals $\{ I_{2m} \}_{m=1}^\infty$ are pairwise disjoint,
which is also true for $\{ I_{2m+1} \}_{m=1}^\infty$. Consequently, it is sufficient to show that
    \begin{equation} \label{eq:minmax}
      \min I_{2m+1} \leq \max I_{2m}, \quad \min I_{2(m+1)} \leq \max I_{2m+1}, \quad m\in\N.
    \end{equation}
We proceed to prove the first inequality in \eqref{eq:minmax}. By the definition of $I_{2m+j}$,
the first inequality in \eqref{eq:minmax} is equivalent to
    \begin{equation} \label{eq:aa}
      a_{2m+1}^{-\frac{1-\delta}{n_{2m+1}}} a_{2m+3}^{-\frac{\delta}{n_{2m+3}}}
      \leq a_{2m}^{-\frac{\delta}{n_{2m+1}}} a_{2m+2}^{- \frac{1}{n_{2m+2}} \left( 1- \delta \frac{n_{2m}}{n_{2m+1}} \right)},
      \quad m\in\N.
    \end{equation}
By taking the logarithm to the base $a_{2m}$ on the both sides of \eqref{eq:aa}, and
then solving the resulting inequality with respect to $\delta$, we
conclude that the first inequality in \eqref{eq:aa} is valid if and only if
$\delta \leq T(m)$ for all $m\in\N$, where
    \begin{equation*}
      T(m) = \frac{\log_{a_{2m}} a_{2m+2}^{-\frac{1}{n_{2m+2}}} - \log_{a_{2m}} a_{2m+1}^{-\frac{1}{n_{2m+1}}}}
      {\frac{1}{n_{2m+1}} - \log_{a_{2m}} a_{2m+1}^{- \frac{1}{n_{2m+1}}} + \log_{a_{2m}} a_{2m+3}^{- \frac{1}{n_{2m+3}}}
        + \log_{a_{2m}} a_{2m+2}^{- \frac{n_{2m}}{n_{2m+1}n_{2m+2}}}}.
    \end{equation*}
Note that the denominator of $T(m)$ can be written in the form
    \begin{equation*}
      \begin{split}
        & \log_{a_{2m}} a_{2m+3}^{- \frac{1}{n_{2m+3}}}  - \log_{a_{2m}} a_{2m+1}^{- \frac{1}{n_{2m+1}}} \\
        & \qquad + \frac{n_{2m}}{n_{2m+1}} \left( \log_{a_{2m}} a_{2m+2}^{- \frac{1}{n_{2m+2}}} -  \log_{a_{2m}} a_{2m}^{-\frac{1}{n_{2m}}} \right) > 0,
        \quad m\in\N,
      \end{split}
    \end{equation*}
and hence $T(m)$ is strictly positive for all $m\in\N$. By means of \eqref{wlacunaryDoubleandj9} we get
    \begin{equation*}
      T(m) \geq \frac{(d-1) \log_{a_{2m}} a_{2m+1}^{-\frac{1}{n_{2m+1}}}}{\frac{1}{n_{2m+1}} - \log_{a_{2m}} a_{2m+1}^{- \frac{1}{n_{2m+1}}}}
      = \frac{(1-d) \log_{a_{2m}} a_{2m+1}}{1+\log_{a_{2m}} a_{2m+1} }, \quad m\in\N.
    \end{equation*}
This implies that, if
    \begin{equation} \label{eq:delta}
      0<\delta \le \frac{(1-d) \log_\mu \lambda}{1+\log_\lambda \mu},
    \end{equation}
then the first inequality in \eqref{eq:minmax} is satisfied for all $m\in\N$. The second inequality in \eqref{eq:aa}
follows by a similar argument, and the choice \eqref{eq:delta} for $\delta$ is again adequate. We conclude that
\begin{equation} \label{eq:nn}
|f_0(z)| + |f_1(z)| \asymp \om(|z|)
\end{equation}
for $ |z|\geq \min I_2$. Finally, since
$\big\{a_m^{-1/n_m}\big\}_{m=1}^\infty$ is strictly increasing, \eqref{eq:nn} holds also for $|z|\leq \min I_2$,
and hence $f_0$ and $f_1$ are analytic functions satisfying \eqref{eq:star}.


\subsection{Asymptotic properties of the products}

Product $f_j$ for $j=0,1$ has exactly $n_{2m+j}$ simple zeros on the each circle
$\big\{z:|z|=s_{2m+j}\big\}$, where $s_{2m+j}=a_{2m+j}^{-1/n_{2m+j}}$ for $m\in\N$.  Therefore, we obtain
    \begin{equation*}
      \begin{split}
        n_{2m+j}& \le
        n(s_{2m+j},f_j,0)=\sum_{k=1}^{m}n_{2k+j}=n_{2m+j}\sum_{k=1}^{m}\frac{1}{\frac{n_{2m+j}}{n_{2k+j}}}\\
        & \le n_{2m+j}\sum_{k=0}^\infty\frac1{\tau^k}\lesssim n_{2m+j}, \quad m\in\N,
      \end{split}
    \end{equation*}
by \eqref{eq:recur}. By applying the estimates $1-x< \log x^{-1} < 2(1-x)$, which are valid for $4^{-1} < x < 1$,
it follows that $n(s_{2m+j},f_j,0) \asymp (1-s_{2m+j})^{-1}$ for all $m\in\N$.
 Consequently,
$$n(r,f_j,0)= \og\left( (1-r)^{-1}\right),\quad r\to 1^-.$$

Now we observe that $E_0 \cup E_1 = [\min I_{2} ,1)$, so it is not possible that $\underline{d}(E_0) = \underline{d}(E_1) = 0$,
where
    \begin{equation*}
      \underline{d}(F) = \liminf_{r\to 1^-} \, \frac{m\big(F \cap [r,1)\big)}{1-r}
    \end{equation*}
is the \emph{lower density} of the set $F\subset [0,1)$, and where $m$ denotes the Lebesgue measure.
Consequently, for some $j=0,1$  we have  $\underline{d}(E_j)>0$, which together with the nature of the sets $E_j$, implies that
$\underline{d}(E_j)>0$ for both $j=0,1$.
Consequently,
  \eqref{eq:finalest} holds outside
a set $E_j^\star = [0,1) \setminus E_j$,  which satisfies
    \begin{equation*}
      \overline{d} ( E_j^\star ) = \limsup_{r\to 1^-} \, \frac{m\big(E_j^\star \cap [r,1)\big)}{1-r}
      <1, \quad j=0,1.
    \end{equation*}
   So, \eqref{eq:finalest}, \cite[Lemma~4.3]{JH}  and Lemma~\ref{lemma:doubling} yield
$M_p(r,f_j) \asymp w(r)$, as $r\to 1^-$, where the constants in the asymptotic relation are independent of
$0<p\le \infty$. This proves \eqref{n}.
 On the other hand,  for any $a\in\C$, Jensen's formula
and \eqref{eq:finalest} imply that
$N(r,f_j,a)\asymp \log \, \om(r)$ for $r \in [0,1) \setminus E_0^\star$.
The fact that the same estimate holds also without the exceptional set $E_0^\star$ follows again from~\cite[Lemma~4.3]{JH}
and Lemma~\ref{lemma:doubling}. Furthermore,
 $ \log \, \om(r)\asymp N(r,f_j,0) \lesssim T(r,f_j) \leq \log M_\infty(r,f_j)\asymp \log \, \om(r)$, as $r\to 1^-$,
again by Jensen's formula. This completes the proof of Theorem~\ref{Theorem:CanonicalProduct}.


\end{document}